\newtheorem{thm}{Theorem}
\newtheorem*{corollary}{Corollary}
\begin{document}

\title[]{A random line intersects {\Large $\mathbb{S}^2$} in two probabilistically independent locations}

\author[]{Dmitriy Bilyk \and Alan Chang \and Otte Hein\"avaara \and \\ Ryan W. Matzke \and Stefan Steinerberger
}
\address{School of Mathematics, University of Minnesota, Minneapolis, MN 55455, USA}
\email{dbilyk@umn.edu}

\address{Department of Mathematics, Washington University in St. Louis, St. Louis, MO 63105, USA}
\email{alanchang@math.wustl.edu}

\address{Department of Mathematics, Princeton University, Princeton, NJ 08544, USA}
\email{oeh@math.princeton.edu}

\address{Department of Mathematics, Vanderbilt University, Nashville, TN 37235, USA}
\email{ryan.w.matzke@vanderbilt.edu}

\address{Department of Mathematics, University of Washington, Seattle, WA 98195, USA}
 \email{steinerb@uw.edu}

\keywords{Convex domains, Interaction Energy, Crofton Formula}
\subjclass[2010]{60D05, 49Q20, 28A75} 
\thanks{D. B. is supported by the NSF grant DMS-2054606. R.W.M. is supported by NSF Postdoctoral Fellowship Grant 2202877. S.S. is supported by the NSF (DMS-2123224).}

\begin{abstract} We consider random lines in $\mathbb{R}^3$ (random with respect to the kinematic measure) and how they intersect $\mathbb{S}^2$. It is known that the entry point and the exit point behave like \textit{independent} uniformly distributed random variables. We give a new proof using bilinear integral geometry and use this approach to show that this property is extremely rare: if $K \subset \mathbb{R}^n$ is a bounded, convex domain with smooth boundary with this property (i.e., the intersection points with a random line are independent), then $n=3$ and $K$ is a ball.
\end{abstract}
\maketitle

\vspace{10pt}

\section{Introduction and results}
The purpose of this paper is to investigate a curious phenomenon on $\mathbb{S}^2$: ``random'' lines intersecting $\mathbb S^2$ do so in two (probabilistically) \textit{independent} points, and this property  is unique to  $\mathbb S^2$ among all boundaries of smooth convex sets in any dimension. Here and henceforth, ``random'' line will always refer to directed lines $\ell$ in  $\mathbb{R}^3$
that are chosen with respect to the unique measure that is invariant under translations and rotations, also known
as the ``kinematic'' measure, which we normalize so that the set of lines
that intersect $\mathbb{S}^2$ has measure one, i.e., it is a probability measure on this set. 
The kinematic measure corresponds to picking the direction of $\ell$ uniformly over $\mathbb{S}^2$ and then choosing  the intersection point of $\ell$ with the two-dimensional subspace orthogonal to this direction according to the Lebesgue measure on that plane. 
Our starting point is the following Theorem.
\begin{thm}[Akopyan, Edelsbrunner, Nikitenko \cite{ak}]\label{t1}
The probability distribution on lines intersecting $\mathbb{S}^2$, defined by choosing two points uniformly and independently on $\mathbb{S}^2$, coincides with the Crofton measure.
\end{thm}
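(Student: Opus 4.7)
The plan is to identify both measures with probability measures on $\mathbb{S}^2 \times \mathbb{S}^2$ and then use rotation invariance plus a chord-length comparison to conclude they agree. Every directed line $\ell$ meeting $\mathbb{S}^2$ is determined by the ordered pair $(x,y) \in \mathbb{S}^2 \times \mathbb{S}^2$ of its entry and exit points. Parameterizing $\ell$ by direction $\omega \in \mathbb{S}^2$ and perpendicular offset $p \in \omega^\perp$ with $|p|<1$, we have $x = p - \sqrt{1-|p|^2}\,\omega$ and $y = p + \sqrt{1-|p|^2}\,\omega$, so this yields a diffeomorphism from the set of directed lines meeting the open unit ball onto $\{(x,y) \in \mathbb{S}^2 \times \mathbb{S}^2 : x \neq y\}$. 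Let $\mu_1$ denote the pushforward of the Crofton measure under this identification and let $\mu_2 = \frac{1}{(4\pi)^2}\,dS(x)\,dS(y)$ be the product of uniform measures; the theorem asserts $\mu_1 = \mu_2$.

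Both $\mu_1$ and $\mu_2$ are probability measures invariant under the diagonal action of $O(3)$ on $\mathbb{S}^2 \times \mathbb{S}^2$: for $\mu_2$ this is immediate, while for $\mu_1$ it is because the Crofton measure is $O(3)$-invariant and the identification $(\omega,p) \leftrightarrow (x,y)$ commutes with rotations. The $O(3)$-orbits on $\mathbb{S}^2 \times \mathbb{S}^2$ are indexed by the single invariant $\langle x, y\rangle \in [-1,1]$ (equivalently, by the chord length $L := |x-y| \in [0, 2]$), and each orbit is a homogeneous space carrying a unique invariant probability measure. Consequently any $O(3)$-invariant probability measure on $\mathbb{S}^2 \times \mathbb{S}^2$ is determined by its marginal under $(x,y) \mapsto L$, and it suffices to check that these two marginals agree.

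For $\mu_2$, fixing $x$ and using the classical fact that $\langle x, y\rangle$ is uniform on $[-1,1]$ gives density $L/2$ for $L$ on $[0,2]$ via the change of variable $L = \sqrt{2-2\langle x,y\rangle}$. For $\mu_1$, the chord length $L = 2\sqrt{1-|p|^2}$ depends only on $r := |p|$; integrating out $\omega \in \mathbb{S}^2$ and the angular coordinate of $p \in \omega^\perp$ in the normalized density $\frac{d\omega\,dp}{4\pi^2}$ produces density $2r$ on $[0,1]$, which transforms under $L = 2\sqrt{1-r^2}$ to the same density $L/2$ on $[0,2]$. Hence $\mu_1 = \mu_2$.

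The main obstacle is conceptual rather than computational: the decisive observation is that rotation invariance together with a one-dimensional orbit space reduces the equality of two four-dimensional measures to the equality of their one-dimensional marginals, bypassing the need to compute the four-dimensional Jacobian of $(\omega, p) \mapsto (x,y)$ directly. Such a direct Jacobian comparison of $d\omega\,dp$ and $dS(x)\,dS(y)$ would be substantially more tedious because the constraint $p \in \omega^\perp$ varies with $\omega$, and it is precisely this bookkeeping that the symmetry argument lets us avoid.
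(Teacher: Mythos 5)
Your proof is correct and is essentially the paper's own argument in different packaging: the reduction via $O(3)$-invariance and the one-dimensional orbit space is the paper's observation that, after conditioning on $X$, the law of $Y$ is invariant under rotations fixing $X$ and hence determined by the law of $\|X-Y\|$; and your two chord-length densities $L/2$ (uniform foot point on the disc for the Crofton side, Archimedes' hat-box for the product side) are exactly the paper's two computations of the CDF $d^2/4$.
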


Phrased differently, if we know that a random line enters $\mathbb{S}^2$ in a certain region $A$, we cannot deduce any information about its exit point (which is also uniformly distributed over $\mathbb{S}^2$). The statement, once suspected to be true, is not difficult to verify and this can be done in a number of different ways. Akopyan, Edelsbrunner and Nikitenko \cite[Lemma 8]{ak} give a very slick proof (summarized below). There is also a fairly direct calculus proof that computes all the surface elements and induced projections, essentially a very simple special case of \cite[Lemma 1]{bushling}. Given the fundamental nature of the statement and the relative ease of the proofs, it may have appeared, explicitly or implicitly, many more times in the literature (see, for example, Burdzy \& Rizzolo \cite[Appendix A]{burdzy}). We will give yet another proof based on recent ideas in bilinear integral geometry. Our main contribution is to show that this property is so rare that it \textit{characterizes} $\mathbb{S}^2$ among domains with twice differentiable boundary.

 \begin{thm}\label{t2}
Let $K$ be a bounded, convex domain with $C^2$ boundary $\partial K$ in $\mathbb{R}^{n}$
and suppose that the measure induced by picking independent points on $\partial K$ independently and uniformly coincides with the Crofton measure. Then $n=3$ and $\partial K = \mathbb{S}^2$ (up to translation and dilation).
\end{thm}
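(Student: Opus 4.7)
The plan is to convert the hypothesis into a pointwise identity on chord-endpoint pairs in $\partial K \times \partial K$ and then extract rigidity from a second-order Taylor expansion near the diagonal. A standard Blaschke--Petkantschin style change of variables gives the kinematic measure in endpoint form:
\[
d\mu(\ell) \;=\; \frac{|\cos\alpha_x|\,|\cos\alpha_y|}{|x-y|^{n-1}}\, d\sigma(x)\, d\sigma(y),
\]
where $\alpha_x,\alpha_y$ are the angles between the chord direction $v = (y-x)/|y-x|$ and the outward unit normals $\nu_x,\nu_y$. The measure obtained from two independent uniform points on $\partial K$ is proportional to $d\sigma(x)\,d\sigma(y)$, so equality of the two measures (combined with $C^2$ regularity, which upgrades a.e.\ equality to pointwise equality off the diagonal) forces
\[
\frac{|\cos\alpha_x|\,|\cos\alpha_y|}{|x-y|^{n-1}} \;=\; c \qquad \text{for all distinct } x,y\in\partial K,
\]
with a constant $c>0$.

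Next I would analyze this identity as $y$ approaches $x$ along a tangent direction. Fix $x\in\partial K$, a unit $v\in T_x\partial K$, and parametrize a neighbourhood of $x$ in $\partial K$ as the graph of a concave function over $T_x\partial K$; let $A\succeq 0$ denote the second fundamental form at $x$ and $\kappa_v = v^{\top}A v$ the normal curvature in direction $v$. The second-order expansion of the boundary gives
\[
y - x = tv - \tfrac{t^2}{2}\kappa_v\,\nu_x + O(t^3), \qquad \nu_y = \nu_x + tA v + O(t^2),
\]
from which a direct computation---requiring a small cancellation between the shape-operator-induced tilt of $\nu_y$ and the deviation of the chord from $T_x\partial K$---yields
\[
|x-y| = t+O(t^3), \qquad |\cos\alpha_x| = \tfrac{t}{2}\kappa_v + O(t^2), \qquad |\cos\alpha_y| = \tfrac{t}{2}\kappa_v + O(t^2).
\]
Substituting into the identity gives $\tfrac{\kappa_v^2}{4}\,t^{3-n} = c + o(1)$ as $t\to 0^+$, so $n=3$ is forced (for $n=2$ the left side tends to $0$, and for $n\ge 4$ it blows up unless $\kappa_v=0$ at every point in every direction, which would make $\partial K$ flat), and we obtain $\kappa_v = 2\sqrt{c}$.

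Thus in $n=3$ every unit tangent direction at every point of $\partial K$ has normal curvature $2\sqrt{c}$; this is exactly the umbilical condition with common principal curvature $2\sqrt{c}$. By the classical theorem that a connected, totally umbilic $C^2$ surface in $\mathbb{R}^3$ is contained in a plane or a sphere, $\partial K$ must be a sphere of radius $1/(2\sqrt{c})$, i.e., $\mathbb{S}^2$ up to translation and dilation. The main obstacle is the seemingly asymmetric leading-order estimate $|\cos\alpha_x| \sim |\cos\alpha_y| \sim \tfrac{t}{2}\kappa_v$: the two endpoints play quite different roles in the local parametrization, and the symmetry only emerges after tracking the tilt of $\nu_y$ via the shape operator and watching a cancellation at order $t$. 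Once that leading-order symmetry is in place, the remainder of the argument is essentially immediate.
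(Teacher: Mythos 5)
Your proposal is correct and follows essentially the same route as the paper: reduce the hypothesis to the pointwise identity $|\cos\alpha_x|\,|\cos\alpha_y|/\|x-y\|^{n-1}=c$ (the paper states this as the constancy of $F(x,y)=|\langle n(x),y-x\rangle\langle n(y),x-y\rangle|/\|y-x\|^{n+1}$, which is the same quantity), Taylor-expand near the diagonal to force $n=3$ by scaling and to show every normal curvature is the same constant, and conclude from the classical theorem on totally umbilic surfaces. The only cosmetic difference is that you phrase the expansion in terms of the shape operator and normal curvatures, while the paper works in explicit graph coordinates $\phi(z)=\tfrac12\langle z,Qz\rangle+o(\|z\|^2)$ and compares eigenvalues of $Q$.
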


It seems exceedingly likely that the requirement that $\partial K$ is $C^2$ can be dropped but this would require some new ideas. The remainder of the paper is structured as follows.
We first recall the elementary proof of Theorem \ref{t1} given by Akopyan, Edelsbrunner and Nikitenko \cite{ak} in Section \ref{s.elem}. Then, in Section \ref{s.crof}, we give an alternative proof of this theorem, which further explains the nature of the phenomenon and is based on tools of integral geometry, such as the Crofton formula and its extensions. Finally, in Section \ref{s.t2} we prove Theorem \ref{t2}. In what follows, $\mathcal{H}^n$ denotes the $n$-dimensional Hausdorff measure and  $\sigma$ is the uniform surface measure on $\mathbb S^{n-1}$ (i.e., a restriction of $\mathcal{H}^{n-1}$ to $\mathbb S^{n-1}$), normalized so that $\sigma (\mathbb S^{n-1})=1$. 
The normalized kinematic measure is denoted by $\mu$ or, when appropriate, simply using the probabilistic notation $\mathbb P$.

\section{Quick Proof of Theorem \ref{t1}}\label{s.elem}
This section quickly provides the argument given by Akopyan, Edelsbrunner and Nikitenko \cite[Lemma 8]{ak}.
 It uses an ancient result of Archimedes \cite{A} that the area of a spherical cap on $\mathbb{S}^2 \subset \mathbb{R}^3$ is
\begin{equation}\label{e.arch}
 \mathcal{H}^2\left( \left\{ x \in \mathbb{S}^2: x_3 \geq t \right\} \right) = 2\pi(1-t).
\end{equation}
This fact is sometimes referred to as the Archimedes hat-box theorem and implies that in $\mathbb R^3$ the horizontal slices of the  sphere and the circumscribed cylinder  have equal areas. An illustration of this fact has allegedly been engraved on Archimedes's tombstone discovered by Cicero \cite{W} (see \cite{nimer} for a proof of Archimedes's theorem as well as another recent application to geometric measure theory).

\begin{proof}[Proof of Theorem \ref{t1} from \cite{ak}]
 Let $\ell$ be a directed line and let $X$ and $Y$ denote the first and the second time $\ell$ intersects $\mathbb{S}^{2}$. We claim that $X$ and $Y$ are independent random variables, uniform on $\mathbb{S}^{2}$.
The claim on uniformity is clear in view of rotational invariance of the kinematic measure and the sphere. 
Also, after conditioning on $X$, the distribution of $Y$ is invariant under all rotations of $\mathbb{S}^{2}$ fixing $X$. It is therefore enough to check that distribution of $\|X - Y\|$ is the same as that of $\|X - X_{0}\|$ for any fixed $X_{0} \in \mathbb{S}^{2}$.
Denote by $Z$ the midpoint of $X$ and $Y$. We note that the $Z$ is the intersection of $\ell$ with the plane orthogonal to $\ell$ and passing through $0$. By the definition of the kinematic measure, after conditioning on the plane, this point is uniform on the projected disc. Hence we have
\begin{align*}
    \mathbb{P}(\|X - Y\| \leq d) = \mathbb{P}\left(\|Z\| \geq \sqrt{1 - \frac{d^2}{4}}\right) = 1 - \left(\sqrt{1 - \frac{d^2}{4}}\right)^{2} = \frac{d^2}{4}.
\end{align*}

We will now calculate $\mathbb{P}(\|X - X_{0}\| \leq d)$ for a fixed $X_0 \in \mathbb S^2$. 
Observe that the inequality $\|X - X_{0}\| \leq d$ is equivalent to $\langle X,X_0 \rangle \ge 1- d^2/2$,
which   allows us to use \eqref{e.arch} and to calculate
\begin{align*}
    \mathbb{P}(\|X - X_{0}\| \leq d) = \mathbb{P}\left(\langle X,X_0 \rangle \ge 1- \frac{d^2}{2} \right) = \frac12 \left( 1 - \left( 1- \frac{d^2}{2} \right) \right) =  \frac{d^2}{4}.
\end{align*}
\end{proof}

\section{Integral-geometric proof of Theorem \ref{t1}}\label{s.crof}
In this section we present another proof of Theorem \ref{t1} based on tools of integral geometry, which has the advantage of building additional intuition. We start with a simple immediate Corollary to Theorem \ref{t1} for which we give an alternative proof. The Corollary can be used to produce yet another independent proof of  Theorem \ref{t1}. This is the original route through which Theorem \ref{t2} was first discovered. We
prove the Corollary in \S \ref{ss.proof}. Another proof of Theorem \ref{t1} is given in \S \ref{ss.2proof}.

\begin{corollary}
Let $A \subset \mathbb{S}^2$ be measurable and let $\ell$ be a line chosen uniformly at random with respect 
to the kinematic measure on the set of all lines intersecting $\mathbb{S}^2$. Then, using $\sigma$ to denote
the normalized measure on $\mathbb{S}^2$,
\begin{align}\label{e.A2}
\mathbb{P}\left(\ell~\emph{intersects}~A~\emph{in two points}\right) &=  \sigma(A)^2.
\end{align}
\end{corollary}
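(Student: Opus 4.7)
My approach is to compute the pushforward of the normalized kinematic measure $\mu$ from the space of oriented lines meeting $\mathbb{S}^2$ to $\mathbb{S}^2 \times \mathbb{S}^2$ under the entry/exit map $\ell \mapsto (x, y)$, and to show that the result is the product measure $\sigma \otimes \sigma$. Once this is established, the corollary drops out immediately from Fubini applied to $\mathbf{1}_{A \times A}$: the probability that both intersection points lie in $A$ equals $\sigma(A)\cdot \sigma(A) = \sigma(A)^2$.

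The central ingredient is a change-of-variables identity, which I will refer to as the ``bilinear Crofton formula,'' rewriting the kinematic density $d\omega \wedge dA(p)$ in terms of the two points $(x,y) \in \partial K \times \partial K$ where a line $\ell$ meets the boundary of a smooth convex body $K \subset \mathbb{R}^n$. The formula one expects is
\begin{equation*}
d\omega \wedge dA(p) \;=\; \frac{|n_x \cdot \omega|\, |n_y \cdot \omega|}{|x-y|^{\,n-1}}\; d\mathcal{H}^{n-1}(x)\, d\mathcal{H}^{n-1}(y),
\end{equation*}
where $n_x, n_y$ are outer unit normals to $\partial K$ at $x, y$ and $\omega = (y-x)/|y-x|$. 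Deriving this identity is where the real work lies: one differentiates the maps $(x,y) \mapsto \omega$ and $(x,y) \mapsto p = (x+y)/2$ on the appropriate tangent spaces and evaluates a $(2n-2) \times (2n-2)$ determinant. I expect this to be the main technical obstacle.

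I would then specialize to $\partial K = \mathbb{S}^2 \subset \mathbb{R}^3$, where $n_x = x$, $n_y = y$, and $n=3$. Using $|x-y|^2 = 2(1 - x \cdot y)$ together with the identity $x \cdot \omega = (x \cdot y - 1)/|x-y|$, one finds $|x \cdot \omega| = |x-y|/2$, and symmetrically $|y \cdot \omega| = |x-y|/2$. The Jacobian therefore collapses to the \emph{constant} $1/4$, which is the spherical miracle driving the whole story. Consequently the pushforward of the unnormalized kinematic measure is a constant multiple of $d\mathcal{H}^2(x)\, d\mathcal{H}^2(y)$, and the mass-one normalization of $\mu$ forces this to equal $d\sigma(x)\, d\sigma(y)$. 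The corollary follows at once; as a bonus, one recovers Theorem \ref{t1} by reading off the independence and uniform marginals of $(x,y)$.
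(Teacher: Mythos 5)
Your argument is correct in outline, but it takes a genuinely different route from the paper's, and it leaves its one substantive step unproved. The paper never computes the pushforward of $\mu$ under $\ell\mapsto(x,y)$ directly. Instead it combines two scalar identities: the Cauchy--Crofton formula \eqref{e.crof}, which gives $\mathbb{P}(n_\ell(A)=1)+2\,\mathbb{P}(n_\ell(A)=2)=\mathcal{H}^2(A)/2\pi$, and the quadratic Crofton formula \eqref{eq:Square Crofton} for the second moment of $n_\ell(A)$, whose right-hand side collapses to a constant multiple of $\mathcal{H}^2(A)^2$ on the sphere because the kernel is identically $1/4$ there; subtracting the two linear relations isolates $\mathbb{P}(n_\ell(A)=2)$, and normalizing with $A=\mathbb{S}^2$ gives $\sigma(A)^2$. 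The Corollary is then used to \emph{deduce} Theorem \ref{t1}, whereas you prove the full product structure of the pushforward first and read off the Corollary as a special case. Your key lemma --- the disintegration $d\mu=c\,|\langle n_x,\omega\rangle\langle n_y,\omega\rangle|\,|x-y|^{1-n}\,d\mathcal{H}^{n-1}(x)\,d\mathcal{H}^{n-1}(y)$ --- is correct (it is exactly the kernel of \eqref{eq:Square Crofton} after factoring $|x-y|^2$ out of the numerator), and your spherical simplification $|\langle x,\omega\rangle|=|x-y|/2$, giving the constant Jacobian $1/4$, coincides with the computation in the paper's proof. However, you flag the derivation of the disintegration as ``the main technical obstacle'' and do not carry it out; to complete the proof you should either perform that Jacobian computation or cite it --- it is precisely \cite[Lemma 1]{bushling} restricted to a convex boundary (where $n_\ell\in\{0,1,2\}$), i.e., the same formula the paper quotes in the proof of Theorem \ref{t2}. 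The trade-off: the paper's route needs only moment identities for $n_\ell(A)$ and avoids any pointwise change of variables; yours is more direct and delivers the independence statement of Theorem \ref{t1} in one stroke, at the cost of requiring the chord disintegration as input.
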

By applying the corollary to $A^c$ we deduce that
\begin{equation}\label{e.Ac2}
    \mathbb{P}\left(\ell~\mbox{does not intersect}~A\right) =  (1-\sigma(A))^2
    \end{equation}
from which we deduce that
$$\mathbb{P}\left(\ell~\mbox{intersects}~A~\mbox{in one point}\right) =  2\sigma(A)(1-\sigma(A)) =  2\sigma(A)\sigma(A^c).$$

\subsection{Quadratic Crofton.} \label{ss.qcrof}
We start by developing some background. 
We briefly discuss Crofton's formula and its extension to a certain quadratic  integral identity, which will be used  to prove the Corollary.  This  identity will also allow for a discussion of $\mathbb{S}^{n-1}$ when $n \neq 3$. We fix a two-dimensional, rectifiable set $A \subset \mathbb{R}^3$ and consider ``random'' lines with respect to the kinematic measure $\mu$. For any given line $\ell$, we denote the number of intersections of the set $A$ with the line $\ell$ by $n_{\ell}(A)$. The Cauchy--Crofton formula in $\mathbb{R}^3$ says that the expected number of intersections only depends on the surface area of the set and 
\begin{equation}\label{e.crof}
    \mathcal{H}^2(A)= 2 \pi \int n_{\ell}(A) d\mu(\ell),
\end{equation}
where $\mathcal{H}^2$ denotes the two-dimensional Hausdorff measure. For more on the Cauchy--Crofton formula, see e.g., \cite[(13.14)]{santa2}, \cite[Theorem 9.7]{fed1}, or \cite[Theorem 3.2.26]{fed2}. The normalization $2\pi$ might look slightly unusual -- this is the result of our normalization of the kinematic measure $\mu$. The value of the constant is easily obtained from the case $A= \mathbb S^2$, since $\mathcal{H}^2 (\mathbb S^2) = 4\pi$ and almost all lines intersecting $\mathbb S^2$  have exactly two intersection points (the $\mu-$measure of this set of lines is one). \\
  The second ingredient is a recently discovered identity (see \cite{chang, stein, stein2}), valid for $(n-1)$-dimensional surfaces $A \subset \mathbb{R}^n$ of regularity $C^2$: for two universal constants $c_n$, $c_n^* > 0$ that only depend on the dimension we have that
 \begin{align}\label{eq:Square Crofton}
c_n \int n_{\ell}(A)^2 d\mu(\ell) & - \mathcal{H}^{n-1}(A) = \\ \nonumber & =  c_n^* \int_{A} \int_{A}\frac{\left|\left\langle n(x), y - x \right\rangle  \left\langle  y - x, n(y) \right\rangle   \right| }{\|x - y\|^{n+1}}~d \mathcal{H}^{n-1} (x) d\mathcal{H}^{n-1} (y),
\end{align}
where $n(x)$ denotes the normal vector to $A$ at $x$. 
The assumption in \cite{stein, stein2} was that $A$ is at least $C^2$. This was recently extended by Bushling \cite{bushling} to the case of $(n-1)-$rectifiable sets. 
By considering the case where $A$ is a subset of a hyperplane (in which case the right-hand side is 0), we note that $n_{\ell}(A) \in \left\{0,1\right\}$ in which case the identity reduces to the Cauchy--Crofton formula and we deduce that $c_3 = 2\pi$. 

\subsection{Proof of the Corollary}\label{ss.proof}
\begin{proof}
Let us fix a measurable set $A \subset \mathbb{S}^2$. 
It suffices to determine the likelihood that a random line $\ell$ intersects $A$ twice. Each line intersects $A$ either 0, 1 or 2 times. The Cauchy--Crofton formula tells us that the expected number of intersections is linearly proportional to the surface area:  
using \eqref{e.crof}, we obtain 
\begin{align} \label{eq:one}
\mathbb{E}(n_{\ell}(A)) = \mathbb{P}(n_{\ell}(A) = 1) + 2 \cdot  \mathbb{P}(n_{\ell}(A) = 2) = \frac{\mathcal{H}^2(A)}{2\pi}.
 \end{align}
Our second ingredient will be the quadratic Crofton formula: it is applicable because $A \subset \mathbb{S}^2$ is necessarily 2-rectifiable (because it is a subset of $\mathbb{S}^2$ which is $2-$rectifiable) and thus  \cite[Lemma 1]{bushling} applies. We note that for points $x \in \mathbb{S}^2$, the normal vector is particularly simple:  $n(x) = x$. Thus the integrand simplifies to
\begin{align*}
\frac{\left|\left\langle n(x), y - x \right\rangle  \left\langle  y - x, n(y) \right\rangle   \right| }{\|x - y\|^{4}} &= \frac{\left(1 - \left\langle x,y \right\rangle\right)^2 }{(\|x - y\|^{2})^2} \\
&=  \frac{\left(1 - \left\langle x,y \right\rangle\right)^2 }{(2 - 2 \left\langle x, y\right\rangle)^2} = \frac{1}{4}.
\end{align*}
Therefore, for some universal constant $\beta >0$,
\begin{align} \label{eq:two}
 \mathbb{P}(n_{\ell}(A) = 1) + 4 \cdot  \mathbb{P}(n_{\ell}(A) = 2) =  \frac{\mathcal{H}^2(A)}{2\pi} + \beta \cdot \big(\mathcal{H}^2(A) \big)^2.
\end{align}
Using \eqref{eq:one} and \eqref{eq:two}, we deduce
$$  \mathbb{P}(n_{\ell}(A) = 2)  = \frac{\beta}{2} \cdot \big(\mathcal{H}^2(A)\big)^2.$$
It remains to compute the constant $\beta$. Considering the case where $A = \mathbb{S}^2$, we have $ \mathbb{P}(n_{\ell}(A) = 2)  = 1$ and $\mathcal{H}^2(A) = 4\pi$, so $\beta = 1/8 \pi^2$. Recalling that the normalized surface measure is given by $\sigma(A) = \mathcal{H}^2(A)/4\pi$, we see that
$$  \mathbb{P}(n_{\ell}(A) = 2)  = \frac{ \big( \mathcal{H}^2(A)\big)^2}{16 \pi^2} = \left(  \frac{ \mathcal{H}^2(A)}{4\pi}\right)^2 = \sigma(A)^2.$$
\end{proof}

\subsection{Second proof of Theorem \ref{t1}}\label{ss.2proof}
\begin{proof}[Proof]
Let $A, B \subset \mathbb{S}^2$ be two disjoint sets 
and consider their union $C = A \cup B$. The Corollary implies that
$$ \mathbb{P}(\ell \text{ intersects } C \text{ at two points}) = \sigma(C)^2.$$
However, we also have that
$$ \sigma(C)^2 = \sigma(A)^2 + \sigma(B)^2 + 2\sigma(A)\sigma(B).$$
Since $\sigma(A)^2$ and $\sigma(B)^2$ are the probabilities that $\ell$ intersects $A$ or $B$ twice, respectively,  we deduce that $2 \sigma(A) \sigma(B)$ is the probability that $\ell$ intersects $A$ and $B$ each exactly once.  Thus for oriented lines $$ \mathbb P (X (\ell) \in A, Y(\ell ) \in B) = \sigma (A) \sigma (B). $$   
If the sets $A$, $B$ are not disjoint, one can finish the proof by using the fact that the two points $X(\ell), Y(\ell)$ inducing the line $\ell$ satisfy $\mathbb P (X (\ell),  Y(\ell ) \in  A \cap B) = \sigma (A \cap B)^2$ and using  the inclusion-exclusion principle. Alternatively, one could just directly deduce that the distribution of $(X (\ell),  Y(\ell ) )$ is absolutely continuous and thus is  equal to $\sigma \times \sigma$.
\end{proof}

\subsection{Higher dimensions}
A nice aspect of the proof is that it can be easily adapted to higher dimensions 
If $A \subset \mathbb{S}^{n-1} \subset \mathbb{R}^n$, we deduce that, for dimensional
constants $\beta_n, \gamma_n$, we have
$$  \mathbb{P}(n_{\ell}(A) = 1) + 2 \cdot  \mathbb{P}(n_{\ell}(A) = 2) =  \beta_n \mathcal{H}^2(A)$$
as well as
$$  \mathbb{P}(n_{\ell}(A) = 1) + 4 \cdot  \mathbb{P}(n_{\ell}(A) = 2) =  \beta_n \mathcal{H}^2(A) + \gamma_n \int_A \int_A \frac{1}{\|x-y\|^{n-3}} d\sigma(x) d\sigma(y)$$
from which we deduce
$$  \mathbb{P}(n_{\ell}(A) = 2) = \frac{\gamma_n}{2} \int_A \int_A \frac{1}{\|x-y\|^{n-3}} d\sigma(x) d\sigma(y)$$
which, unless $n=3$, depends on the set $A$ and not just its measure. By setting $A = \mathbb{S}^{n-1}$, we deduce
$$  \mathbb{P}(n_{\ell}(A) = 2) = \left(  \int_{\mathbb{S}^{n-1}} \int_{\mathbb{S}^{n-1}} \frac{1}{\|x-y\|^{n-3}} d\sigma d\sigma \right)^{-1} \int_A \int_A \frac{1}{\|x-y\|^{n-3}} d\sigma d\sigma.$$
In particular, we see that the probability of a random line intersecting a set $A$ in two points very much depends on both the set and the dimension unless $n=3$. 
The argument of \S \ref{ss.2proof} generalizes and implies that on $\mathbb{S}^{n-1}$, for disjoint $A$ and $B$,
$$ \mathbb{P}(A \cap \ell \neq \emptyset \text{ and } B \cap \ell \neq \emptyset) = c_n \int_{A} \int_{B} \frac{1}{\|x-y\|^{n-3}} d\sigma(x) d \sigma(y),$$
where $c_n$ is a normalization factor that only depends on the dimension. In particular, when $n = 2$, a line passing through $A$ is more likely to also pass through $B$ if they are farther apart. For $n \geq 4$, a line passing through $A$ is more likely to pass through $B$ the closer $B$ is to $A$. It is also a straightforward exercise  to compute that $\mathbb E \| X- Y\| \sim n^{-1/2}$, i.e., in high dimensions the entry and exit points are typically very close. At the same time, two \textit{independent} uniform random points would be nearly orthogonal due to the   \textit{concentration of measure} phenomenon (see, e.g., \cite{ledoux}, \cite[Chapter 14]{matousek}). 
It is even easier to compute that $\mathbb E \langle X,Y \rangle = \frac{n-3}{n+1}$, which is zero only if $n=3$ (and approaches one for large $n$), while for two \textit{independent} uniform random points on the sphere this expectation is obviously zero. 

\section{Proof of Theorem \ref{t2}}\label{s.t2}
\begin{proof}
Let $K$ be a bounded, convex set in $\mathbb{R}^{n}$. We also assume that $K$ has nontrivial $n$-dimensional volume $\mathcal{H}^n(K) > 0$ and that its boundary $\partial K$ is $C^2$.  We consider random lines $\ell$ from the kinematic measure conditioned to hit $K$ and use $(X, Y)$ be the ordered pair of random intersection points of $\ell$ and the boundary $\partial K$. By assumption, if $A$ and $B$ are small disjoint domains on $\partial K$ then, because the random variables are independent,
\begin{align*}
    \mathbb{P}(X \in A \text{ and } Y \in B) = \mathbb{P}(X \in A) \cdot \mathbb{P}(Y \in B).
\end{align*}
We will use this fact for two very small domains (say, geodesic balls on the surface) where we may think of one of these domains as fixed and the second one approaching the first one. The computation in \cite{stein} shows that for disjoint sets, $A \cap B = \emptyset$,
$$     \mathbb{P}(X \in A \text{ and } Y \in B)  =    c_n\int_{A \times B} \frac{|\langle n(x), y - x \rangle \langle n(y), x - y \rangle|}{\|y - x\|^{n + 1}} d \mathcal{H}^{n - 1}(x) d \mathcal{H}^{n - 1}(y).$$ Since, according to Crofton's formula,  $\mathbb P ( X\in A) $ is proportional to $\mathcal{H}^{n - 1} (A)$,
taking the limit as $A$ and $B$ both shrink down to distinct points  forces the function $F:\partial K \times \partial K \rightarrow \mathbb{R}$ given by
$$     F(x, y) = \frac{|\langle n(x), y - x \rangle \langle n(y), x - y \rangle|}{\|y - x\|^{n + 1}}
$$
to be constant. We will now think of $x \in \partial K$ as fixed and $y \in \partial K$ as a point very close to $x$ and deduce additional information from the fact that $F$ is constant. Since the problem is invariant under rotation and translation, we may assume that $\partial K \ni x = \mathbf{0} \in \mathbb{R}^n$ is rotated in such a way that the tangent plane is given by $\left\{z \in \mathbb{R}^n: z_n = 0\right\}$ and that 
$ K \subset \left\{z \in \mathbb{R}^n: z_n \leq 0\right\}$ is a subset of the lower half-space.
Since $\partial K$ is $C^2$, the tangent plane is unique. We can furthermore write $\partial K$ in a neighborhood of $x = \mathbf{0} \in \mathbb{R}^n$  as the graph of a function $\phi:\mathbb{R}^{n-1} \rightarrow \mathbb{R}$ where
$$ \phi(z_1, \dots, z_{n-1}) = \frac{1}{2} \left\langle z, Qz \right\rangle + o(\|z\|^2),$$
where $Q \in \mathbb{R}^{(n-1) \times (n-1)}$ is a symmetric and negative semi-definite matrix. This means that, up to an error term we can write $\partial K$ as $(z, \phi(z))$ for $z \in \mathbb{R}^{n-1}$ very close to the origin. The normal direction to $\partial K$ can also be computed: we note, for small $\delta \in \mathbb{R}^{n-1}$ we have, because $Q$ is symmetric,
\begin{align*}
 \phi(z+ \delta) &= \frac{1}{2} \left\langle z + \delta, Q (z+\delta) \right\rangle + o(\|z\|^2) \\
 &= \phi(z) + \left\langle \delta, Qz \right\rangle + \mathcal{O}(\|\delta\|^2) + o(\|z\|^2).
 \end{align*}
 This means the local tangent plane in $(z, \phi(z))$ is given by 
 $$ (z + \delta, \phi(z+\delta)) = (z,\phi(z)) + (\delta,\left\langle \delta, Qz \right\rangle) +  \mbox{lower order terms.}$$
We note that the linear term is always orthogonal to the vector $\left( -Qz, 1 \right)$ since
$$ \left\langle  (\delta,\left\langle \delta, Qz \right\rangle), \left( -Qz, 1 \right) \right\rangle = - \left\langle \delta, Qz \right\rangle + \left\langle \delta, Qz \right\rangle = 0.$$
For $z \in \mathbb{R}^{n-1}$ close to 0, the normal vector oriented pointing upwards is given by
$$ n(z) = \frac{\left( -Qz, 1 \right)}{\| (-Qz, 1)\|} +o(\|z\|).$$
We remark that the direction of the normal vector is not important in what follows, it could have equally been chosen to point downwards. We also note, again for $z$ close to $\mathbf{0} \in \mathbb{R}^{n-1}$
$$ \| (-Qz, 1)\| = \sqrt{1 + \|Qz\|^2} = 1 + o(\|z\|)$$
allowing us to write, for $z$ close to $\mathbf{0} \in \mathbb{R}^{n-1}$,
$$ n(z) = \left( -Qz, 1 \right) + o(\|z\|).$$
We will now simplify the terms in the expression
$$     F(x, y) = \frac{|\langle n(x), y - x \rangle \langle n(y), x - y \rangle|}{\|y - x\|^{n + 1}}
$$
in the case where $x = \mathbf{0}$ and $y$ is very close to $x$. We will also write $y_{n-1}$ to denote the first $(n-1)-$coordinates of $y$ and note that then $y = (y_{n-1}, \phi(y_{n-1}))$.
We start by noting that $n(x) = (0,0,0,\dots,0,1)$ and thus 
$$\left| \langle n(x), y - x \rangle \right| = \left| \frac{1}{2}\left\langle y_{n-1}, Qy_{n-1} \right\rangle \right| + o(\|y_{n-1}\|^2).$$
Likewise
$$ \left| \langle n(y), x - y \rangle \right| =  \left| \langle n(y), - y \rangle \right| =  \left| \langle n(y), y \rangle \right|$$
and we note that, up to lower order error terms, $n(y) = (-Qy_{n-1}, 1)$ while the point is given by $y = (y_{n-1}, \phi(y_{n-1}))$ and thus
\begin{align*}
 \langle n(y), y \rangle &= \left\langle -Q y_{n-1}, y_{n-1} \right\rangle + \phi(y_{n-1})  + o(\|y_{n-1}\|^2) \\
 &= -\frac{1}{2} \left\langle y_n, Q y_n \right\rangle  + o(\|y_{n-1}\|^2)
 \end{align*}
and thus
$$  \left| \langle n(y), y \rangle \right| = \frac{1}{2} \left| \left\langle y_{n-1}, Qy_{n-1} \right\rangle\right| +  o(\|y_{n-1}\|^2).$$
Finally, we argue that, up to lower order error terms
$$ \|y - x\|^{n + 1} =\|y_{n-1}\|^{n+1} + \mathcal{O}(\|y_{n-1}\|^{n+3}).$$
Thus, as $y \rightarrow 0$, we have
$$ \frac{|\langle n(x), y - x \rangle \langle n(y), x - y \rangle|}{\|y - x\|^{n + 1}}  = \frac{1}{4} \frac{  \left| \langle Q y_{n-1}, y_{n-1} \rangle \right|^2}{\|y_{n-1}\|^{n+1}} + o(\|y_{n-1}\|^2).$$
In order for this to be a constant, we require $n=3$ simply by scaling. Knowing this, the situation immediately simplifies since we know now that we are dealing with a two-dimensional surface in $\mathbb{R}^3$. Since $Q \in \mathbb{R}^{2 \times 2}$ is a symmetric matrix, the spectral theorem implies that we can diagonalize the matrix and assume that the eigenvalues are given by $\lambda_1, \lambda_2 \leq 0$ with corresponding eigenvectors $v_1, v_2 \in \mathbb{R}^2$. Plugging in $y_{n-1} = \varepsilon v_i$, we get
 $$   \frac{\left| \langle Q y, y \rangle \right|^2}{\|y\|^{4}} =  \lambda_i^4.$$
This forces the eigenvalues to be the same, $\lambda_1 = \lambda_2$, and thus $Q = - c \cdot \mbox{Id}_{2 \times 2}$ for some constant $c \geq 0$. This means that the principal curvatures are the same which makes the point an \textit{umbilical point}. It is known that this implies that the surface is a subset of a sphere or the subset of a plane (see, for example, \cite{umb}) and this concludes our proof. Another way of concluding the argument is as follows: since the principal curvatures are the same everywhere, the mean curvature is constant. Aleksandrov \cite{ale} proved that if a closed, connected $C^2$ surface has constant mean curvature, then it is a sphere.  
\end{proof}

\textbf{Acknowledgment.} We are grateful for discussions with 
Arseniy Akopyan and Krzysztof Burdzy.


\begin{thebibliography}{10}

\bibitem{ale} A. D. Aleksandrov, Uniqueness theorems for surfaces in the large. V, Amer. Math. Soc.
Transl. 21 (1962), 412--416

\bibitem{A} Archimedes, On the Sphere and Cylinder. c. 225 BC. 

\bibitem{ak} A. Akopyan, H. Edelsbrunner and A. Nikitenko, The beauty of random polytopes inscribed in the 2-sphere. Experimental Mathematics (2021), p. 1--15.

\bibitem{burdzy} K. Burdzy and D. Rizzolo, A random flight process associated to a Lorentz gas
with variable density in a gravitational field, Stochastic Processes and their Applications 128 (2018), p. 79-107

\bibitem{bushling} R. Bushling, A singular integral identity for surface measure, arXiv:2304.04930

\bibitem{chang} A. Chang,  D. Dabrowski, T. Orponen and M. Villa, Structure of sets with nearly maximal Favard length. arXiv preprint arXiv:2203.01279.

\bibitem{fed1} Herbert Federer. The $(\phi, k)$ rectifiable subsets of $n$-space. Trans. Amer. Math. Soc., 62:114–192, 1947.

\bibitem{fed2} Herbert Federer. Geometric measure theory. Die Grundlehren der mathematischen Wissenschaften, Band 153. Springer-Verlag New York Inc., New York, 1969.


\bibitem{umb} R. Lopez and A. Pampano Classification of rotational surfaces in Euclidean space satisfying a linear relation between their principal curvatures. Math. Nachrichten, 293 (2020), 735--753.

\bibitem{ledoux} M. Ledoux, The Concentration of Measure Phenomenon, Amer. Math. Soc. 2001

\bibitem{matousek} J. Matou\v{s}ek. Lectures on Discrete Geometry. Springer-Verlag, 2002

\bibitem{nimer} A. D. Nimer. Conical 3-uniform measures: A family of new examples and characterizations. J. Differential Geom. 121 (1) 57 - 99, May 2022. 

\bibitem{santa}  L. Santal\'o,  Introduction to integral geometry. Publ. Inst. Math. Univ. Nancago, II. Actualites Scientifiques et Industrielles No. 1198 Hermann \& Cie, Paris, 1953. 

\bibitem{santa2}  L. Santal\'o, Integral Geometry and Geometric Probability, Cambridge University Press, 2004.
 
\bibitem{stein} S. Steinerberger, An inequality characterizing convex domains, arXiv:2209.14153

\bibitem{stein2} S. Steinerberger, Quadratic Crofton and sets that see themselves as little as possible, arXiv:2211.03259


\bibitem{W} G. J. Toomer, Archimedes. Encyclopedia Britannica, May 8, 2023. https://www.britannica.com/biography/Archimedes.
\end{thebibliography}
\end{document}